\numberwithin{equation}{section}
\newtheorem{thm}{Theorem}[section]
\newtheorem{lem}[thm]{Lemma}
\theoremstyle{plain}
\theoremstyle{plain}
\theoremstyle{definition}
\theoremstyle{remark}
\numberwithin{theorem}{section}
\numberwithin{equation}{section}
\numberwithin{figure}{section}
\begin{document}
\title[the geometry of the free boundary]{the geometry of the free boundary}
\author[Emanuel Indrei]{Emanuel Indrei \\
\\
\tiny {Department of Mathematics, Purdue University}\\
}

\makeatletter
\def\blfootnote{\xdef\@thefnmark{}\@footnotetext}
\makeatother

\date{}

\maketitle

\begin{abstract}
The non-transversal intersection of the free boundary with the fixed boundary is obtained for nonlinear uniformly elliptic operators
when $ \Omega = \{\nabla u \neq 0\} \cap \{x_n>0\}$ thereby solving a problem in elliptic theory that in the case of the Laplacian is completely understood but has remained arcane in the nonlinear setting in higher dimension. Also, a solution is given to a problem discussed in "Regularity of free boundaries in obstacle-type problems" \cite{MR2962060}. The free boundary is $C^1$ in a neighborhood of the fixed if the solution is physical and if $n=2$ in the absolute general context. The regularity is even new for the Laplacian.  The innovation is via geometric configurations on how free boundary points converge to the fixed boundary and investigating the spacing between free boundary points. 
\end{abstract}

\section{Introduction} 
In the classical obstacle problem, one seeks a function $u: B_1 \rightarrow \mathbb{R}$ which solves the PDE
\begin{equation}  \label{op}
\begin{cases}
\Delta u=\chi_{u > 0} & \text{in }B_1,\\
u \ge 0 & \text{in }B_1\\
u=g & \text{on }\partial B_1,\
\end{cases}
\end{equation}
where $g$ is a given function (e.g. continuous), 
and $B_1 \subset \mathbb{R}^n$ is the unit ball. The equation models the equilibrium state of an elastic membrane stretched over an ``obstacle" (i.e. a prescribed function).  In his seminal $1977$ paper, Caffarelli proved that the free boundary $\Gamma=\partial \{u>0\} \cap B_1$ is $C^1$ subject to a natural density assumption \cite{MR0454350}. In general, there are examples of two singular curves touching at a point (and which violate Caffarelli's assumption): Schaeffer constructed examples of cusp-like singularities \cite{MR0516201}.  

The behavior of the free boundary close to the fixed boundary for fully nonlinear elliptic operators has been open since Caffarelli's 1977 paper. By localizing around a point at the intersection of free and fixed boundary and considering a variation of \eqref{op} where the sign restriction on the solution is removed one is led to the following problem 
\begin{equation}  \label{meprev}
\begin{cases}
F(D^2 u)=\chi_\Omega & \text{in }B_{1}^{+}\\
u=0 & \text{on }B'_{1}
\end{cases}
\end{equation}
where $F$ is a convex, $C^1$ fully nonlinear uniformly elliptic operator, 
$$\Omega = \big(\{u \ne 0\} \cup \{\nabla u \neq 0\} \big) \cap \{x_n>0\}\subset \mathbb{R}_+^n.$$ 
In this context, the free boundary is $\Gamma=B_1^+\cap\partial \Omega$ and non-transversal intersection means that on a small enough scale, the free boundary is trapped below any cone with opening strictly less than $\pi$: for any $\epsilon>0$ there exists $\rho_\epsilon>0$ such that 
\[
\Gamma \cap B_{\rho_\epsilon}^+ \subset B_{\rho_\epsilon}^+ \setminus \mathcal{C}_\epsilon, 
\]
where $\mathcal{C}_\epsilon := \{x_n>\epsilon|x'|\}$, $x'=(x_1,\ldots, x_{n-1})$. 

The linear case has been well-studied via monotonicity formulas and explicit representations involving Green's function \cite{MR1745013, MR1950478, MR2962060}; see \cite[Chapter 8]{MR2962060} for many very interesting references. In the fully nonlinear uniformly elliptic context, non-transversal intersection remained a conjecture and was fully solved in \cite{I}. In addition, $C^1$ regularity of the free boundary was obtained in the one-phase problem without density assumptions which completed Caffarelli's regularity theory up-to-the boundary with a novel approach. 

One feature of interest is that the boundary case is generally not better than $C^{1,\text{Dini}}$, whereas in the interior one obtains analyticity at points with positive density. Also, some assumption is necessary in the interior (via Schaeffer's example) whereas close to the fixed boundary, the singularities cease existing.  

The problem for a choice of $\Omega$ in superconductivity is specifically stated in the book \cite[Chapter 8, Remark 8.8]{MR2962060} (open also for the Laplacian): investigating $C^1$ free boundary regularity at contact points for $\Omega = \{\nabla u \neq 0\}  \cap \{x_n>0\}$. Mathematically, this is complicated because one does not a priori know which values the function takes on in the complement of $\Omega$. 

The non-transversal intersection is proven in this paper (Theorem \ref{tt}). In addition, when $n=2$ the $C^1$ regularity is shown (Theorem \ref{1r}) and in the physical context without the dimension constraint (Theorem \ref{U1r}). The innovation is via geometric configurations on how free boundary points converge to the fixed boundary in case non-transversal intersection is false and investigating the spacing between free boundary points.

\section{Preliminaries} \label{pre}
The assumptions involve:
\begin{itemize}
\item $F(0)=0$;
\item $F$ is uniformly elliptic with
$$
\mathcal{P}^{-}(M-N)\le F(M)-F(N)\le\mathcal{P}^{+}(M-N),
$$
where $M$ and $N$ are symmetric matrices and $\mathcal{P}^{\pm}$
are the Pucci operators
$$
\mathcal{P}^{-}(M)=\inf_{\lambda_{0} \le N\le\lambda_{1}} \text{tr}(NM),\qquad\mathcal{P}^{+}(M)=\sup_{\lambda_{0}\le N\le\lambda_{1}}\text{tr} (NM);
$$
\item $F$ is convex and $C^1$ (the method also works for weaker assumptions).
\end{itemize}

Assume $\Omega'$ is an open set. A continuous function $u$ is an $L^p$- viscosity subsolution if for all $\phi \in W_{loc}^{2,p}(\Omega')$, whenever $\epsilon>0$, $A \subset \Omega'$ is open, $f \in L_{loc}^p(\Omega')$, $n<2p$, and 
$$
F(D^2\phi(x))-f(x) \ge \epsilon
$$
a.e. in $A$, $u-\phi$ cannot have a local maximum in $A$. A supersolution is similarly defined and a solution is both a supersolution and subsolution. An $L^n$-viscosity solution that lies in $W_{loc}^{2,p}(\Omega')$ satisfies the equation a.e. \cite{MR1376656}. Observe that one may extend the  theory to the parabolic case
\cite{MR4322624}.\\

A continuous function $u$ belongs to $P_r^+(0,M, \Omega)$ if  \\

\noindent 1. $F(D^2 u)=\chi_\Omega$ a.e. in $B_r^+$\\
2. $||u||_{L^\infty(B_r^+)} \le M$\\
3. $u=0$ on $\{x_n=0\} \cap \overline{B_r^+}=:B'_{r}$\\

in the viscosity sense.\\

Also, given $u \in P_r^+(0,M, \Omega)$, the free boundary is denoted by 
$
\Gamma=\partial \Omega \cap B_r^+;
$ 
a blow-up limit of $\{u_j\} \subset P_1^+(0,M,\Omega)$ is a limit of the form 

\begin{equation*} 
\lim_{k \rightarrow \infty} \frac{u_{j_k}(s_kx)}{s_k^2},
\end{equation*}
where $\{j_k\}$ is a subsequence of $\{j\}$ and $s_k \rightarrow 0^+$.

\section{Non-transversal intersection and the $C^1$ regularity}
\begin{thm} \label{tt}
Suppose $u \in P_1^+(0,M, \Omega)$. There exists $r_0>0$ and a modulus of continuity $\omega$ such that 
$$\Gamma(u) \cap B_{r_0}^+ \subset \{x=(x',x_n): x_n \le \omega(|x'|)|x'|\}$$ provided $0 \in \overline{\Gamma(u)}$.  
\end{thm}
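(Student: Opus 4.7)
The plan is to argue by contradiction through blow-up, reducing to a Liouville-type rigidity for global solutions in the half-space. Suppose the conclusion fails. Then there exist $\eps_0>0$ and a sequence $\{x^{(k)}\}\subset\Gamma(u)$ with $x^{(k)}\to 0$ and $x^{(k)}_n>\eps_0|(x^{(k)})'|$. Set $r_k:=|x^{(k)}|\to 0^+$ and rescale $u_k(x):=u(r_k x)/r_k^2$. By convexity and uniform ellipticity of $F$ together with the $L^p$ viscosity theory \cite{MR1376656} and Evans--Krylov, the $u_k$ admit uniform interior $W^{2,p}_\loc$ and $C^{1,\alpha}$ bounds up to the flat boundary $\{x_n=0\}$. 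Passing to a subsequence, $u_k\to u_0$ in $C^{1,\alpha}_\loc(\overline{\RR^n_+})$, where $u_0$ solves $F(D^2u_0)=\chi_{\Omega_0}$ in $\RR^n_+$ with $u_0=0$ on $\{x_n=0\}$; moreover $y_k:=x^{(k)}/r_k\to y_\infty\in S^{n-1}$ with $(y_\infty)_n>0$.

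The central step is to show $y_\infty\in\Gamma(u_0)$ and then classify $u_0$. Since $\nabla u(x^{(k)})=0$ and $\nabla u_k\to\nabla u_0$ uniformly on compacts, $\nabla u_0(y_\infty)=0$. To place $y_\infty$ on $\partial\Omega_0$ rather than in the interior of $\{\nabla u_0=0\}$, I would establish a quantitative gradient non-degeneracy at free boundary points — i.e., $\sup_{B_\rho(x)}|\nabla u|\gtrsim\rho$ for $x\in\Gamma(u)$ and all small $\rho$ — and then rescale: the estimate passes to $u_k$, the uniform convergence transfers it to $u_0$, and every neighborhood of $y_\infty$ meets $\Omega_0$, i.e.\ $y_\infty\in\Gamma(u_0)$.

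With $y_\infty$ identified as an interior free-boundary point of a global half-space solution, I would invoke a Liouville-type classification adapted to the setting $\Omega=\{\nabla u\neq 0\}\cap\{x_n>0\}$, mirroring and extending the classification used in \cite{I}. Any admissible such $u_0$ should reduce to a one-dimensional profile in $x_n$, a degenerate polynomial solution, or the zero function; in each case $\Gamma(u_0)\subset\{x_n=0\}$, contradicting $(y_\infty)_n>0$. The modulus $\omega$ then emerges from the usual compactness wrap-up: were no $\omega$ to work, the very sequence produced above would be constructible at every scale.

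The main obstacle is the gradient non-degeneracy step. Unlike the classical obstacle problem, where a barrier argument using $F(D^2u)=1$ on $\Omega$ yields quadratic non-degeneracy for $u$ itself, here the set $\Omega$ is defined purely by the vanishing of $\nabla u$ and no a priori sign on $u$ is available, so a naive barrier does not close. I expect to need a combination of an ACF-type monotonicity formula for directional derivatives $\partial_i u$ — each of which is $F$-harmonic on $\Omega$ after linearization — together with boundary barriers at $\{x_n=0\}$ to transfer the gradient information across the fixed boundary. Once this is in place, the classification step should follow the blueprint of \cite{I} with only technical modifications.
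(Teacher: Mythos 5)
Your proposal follows the ``classical'' blow-up-and-classify template, while the paper's proof is built around a quite different mechanism. The two key undeveloped steps in your sketch are genuine gaps and do not match what the paper does.

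\textbf{Different route, and where it stalls.} You propose to (a) establish a quantitative gradient non-degeneracy so that $y_\infty=\lim x^{(k)}/|x^{(k)}|$ can be placed on $\Gamma(u_0)$ and not in the interior of $\{\nabla u_0=0\}$, and (b) invoke a Liouville classification of global half-space solutions to conclude $\Gamma(u_0)\subset\{x_n=0\}$. The paper avoids (a) entirely. Its proof starts by assuming a blow-up of the form $u_0=ax_1x_n+bx_n^2$ with $a\neq 0$ (after rotation), and rules it out by a \emph{geometric} argument: it produces, for each $m$, two free boundary points $z_{l_m,k_m}$ and $z_{k_m(m)}$ of the rescaled solution $\tilde u_{k_m}$ satisfying $|z_{l_m,k_m}|\ge R$, $|z_{k_m(m)}|\le\alpha$, $|z_{l_m,k_m}-z_{k_m(m)}|\ge R-\alpha$, with segment direction $e^m\to e$. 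Since $\nabla\tilde u_{k_m}$ vanishes at both endpoints, the fundamental theorem of calculus gives
\[
0=\int_0^1 \partial_{e^m x_1}\tilde u_{k_m}(sz_{l_m,k_m}+(1-s)z_{k_m(m)})\,ds,
\]
and the integral is split into a near-origin piece (bounded by $T\,t(\alpha)$ via $C^{1,1}$), an annulus piece (which by $C^{2,\alpha}$ convergence tends to $(t(R)-t(\alpha))e_n a>0$), and an outer piece (nonnegative in the $\liminf$ via Fatou). Choosing $\alpha$ small produces a contradiction. Only after the hyperbolic profile is excluded does the paper cite Proposition 3.6 of \cite{I} to classify all remaining blow-ups as $bx_n^2$, and the final compactness step is then immediate. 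This ``spacing between free boundary points'' argument is the paper's stated innovation and is absent from your proposal.

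\textbf{The concrete gaps.} First, you yourself call the non-degeneracy step ``the main obstacle'' and only indicate you would ``expect to need'' an ACF-type monotonicity formula for $\partial_i u$ together with boundary barriers. Neither is available here in the generality required: for a general convex $C^1$ uniformly elliptic $F$ with $\Omega=\{\nabla u\neq 0\}\cap\{x_n>0\}$ and no sign restriction on $u$, an ACF formula for directional derivatives does not come for free; the linearized operator has only bounded measurable coefficients, and there is no sign structure for $\partial_i u$ across $\partial\Omega$. Second, the Liouville classification you propose to ``mirror and extend'' from \cite{I} is used in the paper only \emph{conditionally}, after the hyperbolic blow-up has already been eliminated; your proposal leans on an unconditional version of it, which is precisely what the paper's FTC/spacing argument supplies. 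In short: your sketch correctly records that $\nabla u_0(y_\infty)=0$ with $(y_\infty)_n>0$ would contradict a hyperbolic or one-dimensional parabolic profile, but the two pillars it rests on (non-degeneracy, classification) are exactly the hard part, and the paper's actual solution bypasses them with a different, more elementary idea.
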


\begin{proof}
Suppose there exists $\epsilon>0$ such that for all $l \in \mathbb{N}$ there exists $x_l \in C_\epsilon \cap B_{\frac{1}{l}}^+\cap \Gamma(u)$. Consider $e \in \mathbb{S}^{n-1}$ such that along a subsequence $x_l \rightarrow 0$ along $e$ ($e_n>0$); observe 
$$
\frac{x_l}{|x_l|} \rightarrow e \in \overline{C_\epsilon}.
$$
If $u_0=ax_1x_n+bx_n^2$ is a blow-up limit of $\{u\}$ for $a \neq 0$ mod a rotation, $a>0$, 
$$
\lim_{k\rightarrow \infty} \frac{u(s_kx)}{s_k^2}=ax_1x_n+bx_n^2.
$$
Next, observe that when
$$
\tilde u_k(x)= \frac{u(s_kx)}{s_k^2},
$$
one obtains 
$$
\sup_k ||D^2 \tilde u_k||_{L^\infty(B_{t_*}^+)}=T=T(t_*)<\infty
$$
if $t_*>0$ \cite{MR3513142}.
Let $z_{l,k}=\frac{x_l}{s_k}$, $R>1$, and $\alpha \in (0,1/5).$ 
Since  
$$
||\tilde u_k - u_0||_{C^{1,\alpha_r}(B_R^+)} \rightarrow 0, 
$$ and 
$$
\inf_{C_\epsilon \setminus B_{\alpha}^+} |\nabla u_0 (x)| \ge a_1>0
$$
it follows that 
there exists $N=N(\alpha, R, a, T)$ where assuming $k \ge N$,
\begin{equation} \label{w}
\inf_{(B_R^+\cap C_\epsilon) \setminus B_{\alpha}^+} |\nabla \tilde u_k (x)|  \ge \frac{a_1}{3}.
\end{equation}
Let $k_1\ge N$ be the smallest integer for which $|z_{1,k_1}| \ge R$  via $s_k \rightarrow 0^+$; note that \eqref{w} readily implies 
$$
\Gamma_k \cap ((B_R^+\cap C_\epsilon) \setminus B_{\alpha}^+) =\emptyset
$$
when $k \ge N$.
Now, since 
$$\mathcal{L}_t=\{t z_{1,k_1}: t \in (0,1] \} \subset C_\epsilon$$
set $z_{k_1(1)}$ as an element on $B_{|x_1|}^+ \cap \Gamma(\tilde u_{k_1})$ closest to $\mathcal{L}_t$; note that $|x_l| \le \frac{1}{l} \rightarrow 0$, therefore without loss of generality, one may assume $|x_1|<\alpha$. In particular, if 
$$
A_{1}=\{tz_{1,k_1}+(1-t)z_{k_1(1)}: 0\le t\le 1\}
$$
then there are at least two free boundary elements on $A_1$: $z_{1,k_1}$, $z_{k_1(1)}$. Note that mod a re-labeling, thanks to
$$
\Gamma_k \cap ((B_R^+\cap C_\epsilon) \setminus B_{\alpha}^+) =\emptyset,
$$
one may assume $z_{1,k_1}, z_{k_1(1)}$ are the sole free boundary points on $A_1$ because if not, since there are no free boundary points in $((B_R^+\cap C_\epsilon) \setminus B_{\alpha}^+)$, one can pick $z_1 \in A_1$ closest to $\partial B_R$, and then one can similarly choose $z_2 \in A_{1}$ closest to $\partial B_\alpha$. In particular, 
$$t \mapsto \partial_{x_1} \tilde u_{k_1}(tz_{l_1,k_1}+(1-t)z_{k_1(1)})$$ 
is absolutely continuous on $[0,1]$. 
Next denote $l_1=1$ and select the smallest integer $l_2 \ge 2$ for which $|x_{l_2}| <|x_1|$. 
Let $k_2 \ge k_1$ be the smallest integer for which $|z_{l_2,k_2}| > R$: $\frac{|x_{l_2}|}{s} \rightarrow \infty$ when $s \rightarrow 0^+$, therefore via $s_k \rightarrow 0$, $k_2$ exists. Hence one may iterate and thus obtain two elements on $\Gamma(\tilde u_{k_2})$:  

$$z_{l_2,k_2} \in C_\epsilon \setminus B_{R}^+, $$ 

$$z_{k_2(2)} \in B_{|x_{l_2}|}^+.$$
Observe that via iterating one can select $z_{l_m,k_m}, z_{k_m(m)} \in \Gamma(\tilde u_{k_m})$ s.t.
$$e^m=\frac{z_{l_m,k_m}-z_{k_m(m)}}{|z_{l_m,k_m}-z_{k_m(m)}|},$$

$$
|z_{l_m,k_m}| \ge R,
$$
$$
|z_{k_m(m)}| \le \alpha,
$$
$$
|z_{l_m,k_m}-z_{k_m(m)}| \ge R-\alpha>0;
$$
moreover, via
$$
\frac{x_l}{|x_l|} \rightarrow e,
$$
note
$$
\frac{z_{l_m,k_m}}{|z_{l_m,k_m}|}=\frac{x_{l_m}}{|x_{l_m}|} \rightarrow e,
$$
$$
\frac{z_{k_m(m)}}{|z_{k_m(m)}|} \rightarrow e
$$
(because $z_{l,k}=\frac{x_l}{s_k}$, one may select free boundary points $z_{k_m(m)}$ close to the line along $e$),
thus
\begin{align*}
&\Big|\frac{z_{l_m,k_m}-z_{k_m(m)}}{|z_{l_m,k_m}-z_{k_m(m)}|}-e\Big|\\
&=\Big|\frac{z_{l_m,k_m}-|z_{l_m,k_m}|e+|z_{l_m,k_m}|e-|z_{k_m(m)}|e+|z_{k_m(m)}|e -z_{k_m(m)}}{|z_{l_m,k_m}-z_{k_m(m)}|}-e\Big|\\
&=\Big|\frac{|z_{l_m,k_m}|(\frac{z_{l_m,k_m}}{|z_{l_m,k_m}|}-e)+|z_{l_m,k_m}|e-|z_{k_m(m)}|e+|z_{k_m(m)}|(e -\frac{z_{k_m(m)}}{|z_{k_m(m)}|})}{|z_{l_m,k_m}-z_{k_m(m)}|}-e\Big| \\
&\le \Big|\frac{|z_{l_m,k_m}|(\frac{z_{l_m,k_m}}{|z_{l_m,k_m}|}-e)+|z_{k_m(m)}|(e -\frac{z_{k_m(m)}}{|z_{k_m(m)}|})}{|z_{l_m,k_m}-z_{k_m(m)}|}\Big|+\Big|\frac{|z_{l_m,k_m}|-|z_{k_m(m)}|-|z_{l_m,k_m}-z_{k_m(m)}|}{|z_{l_m,k_m}-z_{k_m(m)}|}\Big|\\
& \rightarrow 0 
\end{align*}
since assuming $|z_{l_m,k_m}| \rightarrow \infty$, via $|z_{k_m(m)}| \le \alpha$,
$$
\Big|\frac{1-\frac{|z_{k_m(m)}|}{|z_{l_m,k_m}|}-|\frac{z_{l_m,k_m}}{|z_{l_m,k_m}|}-\frac{z_{k_m(m)}}{|z_{l_m,k_m}|}|}{|\frac{z_{l_m,k_m}}{|z_{l_m,k_m}|}-\frac{z_{k_m(m)}}{|z_{l_m,k_m}|}|}\Big|  \rightarrow 0,
$$
$$
\frac{|z_{l_m,k_m}|}{|z_{l_m,k_m}-z_{k_m(m)}|} \rightarrow 1.
$$
And, assuming (mod a subsequence) $|z_{l_m,k_m}| \rightarrow \alpha_1<\infty$, since
$$
|z_{l_m,k_m}-z_{k_m(m)}| \ge R-\alpha,
$$
it then 
immediately follows that
$$
\Big|\frac{|z_{l_m,k_m}|(\frac{z_{l_m,k_m}}{|z_{l_m,k_m}|}-e)+|z_{k_m(m)}|(e -\frac{z_{k_m(m)}}{|z_{k_m(m)}|})}{|z_{l_m,k_m}-z_{k_m(m)}|}\Big|
\rightarrow 0,
$$
also via
$$
\frac{z_{l_m,k_m}}{|z_{l_m,k_m}|} \rightarrow e,
$$
$$
\frac{z_{k_m(m)}}{|z_{k_m(m)}|} \rightarrow e,
$$
note
$$
\Big||z_{l_m,k_m}|-|z_{k_m(m)}|-|z_{l_m,k_m}-z_{k_m(m)}|\Big| \rightarrow 0.
$$
Therefore
$$
e^m \rightarrow e.
$$ 
Let
$$
A_{k_m, l_m}=\{tz_{l_m,k_m}+(1-t)z_{k_m(m)}: 0\le t\le 1\},
$$

$$
A_{k_m, l_m, \alpha}=\{tz_{l_m,k_m}+(1-t)z_{k_m(m)}: 0\le t\le t(\alpha), \hskip .1in t(\alpha) =\sup \{t: tz_{l_m,k_m}+(1-t)z_{k_m(m)} \in B_{\alpha}^+\}\},
$$

$$
A_{k_m, l_m, R, \alpha}=\{tz_{l_m,k_m}+(1-t)z_{k_m(m)}: t(\alpha)\le t\le t(R), \hskip .1in t(R) =\sup \{t: tz_{l_m,k_m}+(1-t)z_{k_m(m)} \in B_{R}^+\}\},
$$

$$
A_{k_m, l_m, R}=\{tz_{l_m,k_m}+(1-t)z_{k_m(m)}: t(R)\le t\le 1\}.
$$
Observe
\begin{align*}
\frac{d}{dt} \partial_{x_1}& \tilde u_{k_m}(tz_{l_m,k_m}+(1-t)z_{k_m(m)})\\
& = \langle \nabla \partial_{x_1} \tilde u_{k_m}(tz_{l_m,k_m}+(1-t)z_{k_m(m)}), z_{l_m,k_m}-z_{k_m(m)} \rangle \\
&=  \langle \nabla \partial_{x_1} \tilde u_{k_m}(tz_{l_m,k_m}+(1-t)z_{k_m(m)}), \frac{z_{l_m,k_m}-z_{k_m(m)}}{|z_{l_m,k_m}-z_{k_m(m)}|} \rangle |z_{l_m,k_m}-z_{k_m(m)}|\\
&= \langle \nabla \partial_{x_1} \tilde u_{k_m}(tz_{l_m,k_m}+(1-t)z_{k_m(m)}), e^m  \rangle |z_{l_m,k_m}-z_{k_m(m)}|\\
& = \partial_{e^m x_1} \tilde u_{k_m}(tz_{l_m,k_m}+(1-t)z_{k_m(m)}) |z_{l_m,k_m}-z_{k_m(m)}|.
\end{align*}
In particular, thanks to absolute continuity of
$$t \mapsto \partial_{x_1} \tilde u_{k_m}(tz_{l_m,k_m}+(1-t)z_{k_m(m)})$$ 
on $[0,1]$ and the fact that $z_{l_m,k_m}, z_{k_m(m)} \in \Gamma(\tilde u_{k_m})$, 
\begin{align*}
0&=\partial_{x_1} \tilde u_{k_m}(tz_{l_m,k_m}+(1-t)z_{k_m(m)})|_{t=1}-\partial_{x_1} \tilde u_{k_m}(tz_{l_m,k_m}+(1-t)z_{k_m(m)})|_{t=0}\\
&=\int_0^1 \partial_{e^m x_1} \tilde u_{k_m}(sz_{l_m,k_m}+(1-s)z_{k_m(m)}) |z_{l_m,k_m}-z_{k_m(m)}| ds.
\end{align*}
Therefore, since $|z_{l_m,k_m}-z_{k_m(m)}| \ge R-\alpha>0,$
\begin{align*}
0&=\int_0^1 \partial_{e^m x_1} \tilde u_{k_m}(sz_{l_m,k_m}+(1-s)z_{k_m(m)}) ds\\
&=\int_0^{t(\alpha)} \partial_{e^m x_1} \tilde u_{k_m}(sz_{l_m,k_m}+(1-s)z_{k_m(m)}) ds\\
&+\int_{t(\alpha)}^{t(R)} \partial_{e^m x_1} \tilde u_{k_m}(sz_{l_m,k_m}+(1-s)z_{k_m(m)})  ds\\
&+\int_{t(R)}^{1} \partial_{e^m x_1} \tilde u_{k_m}(sz_{l_m,k_m}+(1-s)z_{k_m(m)})  ds.
\end{align*}
Supposing $s \in [t(\alpha), t(R)]$, the $C^{2,\alpha}(B_R^+\setminus B_\alpha^+)$ regularity yields 

$$
\partial_{e^m x_1} \tilde u_{k_m}(sz_{l_m,k_m}+(1-s)z_{k_m(m)}) \rightarrow \partial_{x_1e} u_{0}=e_na>0
$$
uniformly; therefore
$$
\int_{t(\alpha)}^{t(R)} \partial_{e^m x_1} \tilde u_{k_m}(sz_{l_m,k_m}+(1-s)z_{k_m(m)})  ds \rightarrow (t(R)-t(\alpha))e_na>0.
$$
Supposing $s \in [t(R), 1]$, then Fatou's lemma implies

$$
0\le \liminf_m (1-t(R)) \partial_{x_1e} u_{0} \le \liminf_m   \int_{t(R)}^{1} \partial_{e^m x_1} \tilde u_{k_m}(sz_{l_m,k_m}+(1-s)z_{k_m(m)})ds;$$

also, $C^{1,1}(B_1^+)$ regularity yields
$$
\int_0^{t(\alpha)} |\partial_{e^m x_1} \tilde u_{k_m}(sz_{l_m,k_m}+(1-s)z_{k_m(m)}) |ds \le T t(\alpha);
$$
observe $t(\alpha)\rightarrow 0^+$ as $\alpha \rightarrow 0$; selecting $R>1$ and some small $\alpha(R,T, e,a) \in (0,1/5)$ so that
$$
\frac{t(R) e_n a}{e_na+T}>t(\alpha),
$$

\begin{align*}
0&=\liminf_{m} \Big(\int_0^1 \partial_{e^m x_1} \tilde u_{k_m}(sz_{l_m,k_m}+(1-s)z_{k_m(m)}) ds\Big)\\
& \ge  (t(R)-t(\alpha))e_na-T t(\alpha),
\end{align*}
a contradiction. 
This then yields that 
$$u_0=ax_1x_n+bx_n^2$$
with $a \neq 0$, cannot be generated as a blow-up limit of $\{u\}$. Therefore, \cite[Proposition 3.6]{I} implies that all blow-up limits are of the form 
$$u_0=bx_n^2.$$
To complete the argument, let $x_l \in \Gamma(u) \cap B_{1/l}^+ \cap \mathcal{C}_\epsilon$, $y_l=\frac{x_l}{r_k}$ with $r_l=|x_l|$. Observe when $\tilde u_l(x)=\frac{u(r_lx)}{r_l^2}$ that $y_l \in \Gamma(\tilde u_l)$, $\tilde u_l \rightarrow bx_n^2$, $y_{l} \rightarrow y \in \partial B_1 \cap C_\epsilon$ (up to a subsequence), and $y \in \Gamma(u_0)$, a contradiction. 

\end{proof}

The following lemma is the central tool to show $C^1$ regularity.

\begin{lem} \label{rl}
Suppose $u \in P_1^+(0,M, \Omega)$ and  $ u\ge 0.$
Then for any $\epsilon>0$ there exists $r(\epsilon, M)>0$ such that if $x^0 \in \Gamma(u) \cap B_{1/2}^+$ and $d=x_n^0<r,$ then 
$$ 
\sup_{B_{2d}^+(x^0)} |u-h| \le \epsilon d^2, \hskip .2in \sup_{B_{2d}^+(x^0)} | \nabla u - \nabla h| \le \epsilon d, 
$$
where $$h(x)= b[(x_n-d)^+]^2,$$ and $b>0$ depends on the ellipticity constants of $F$. 
\end{lem}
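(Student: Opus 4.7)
The plan is a compactness-contradiction argument, whose technical core is the classification of an emerging global half-space profile.

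\emph{Setup and rescaling.} Suppose the claim fails: there exist $\epsilon_0>0$ and sequences $u_j$ meeting the hypotheses, together with $x^j \in \Gamma(u_j) \cap B_{1/2}^+$ and $d_j:=x^j_n \to 0^+$, such that
\[
\sup_{B_{2d_j}^+(x^j)}|u_j-h_j| > \epsilon_0 d_j^2 \quad \text{or} \quad \sup_{B_{2d_j}^+(x^j)}|\nabla u_j-\nabla h_j| > \epsilon_0 d_j,
\]
where $h_j(x):=b[(x_n-d_j)^+]^2$. Rescale about the foot of the perpendicular from $x^j$, $\tilde x^j:=((x^j)',0)$, via $v_j(y):=u_j(\tilde x^j+d_j y)/d_j^2$. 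Then $v_j \ge 0$ solves the same obstacle-type equation on half-balls of radius $\to \infty$, $v_j=0$ on $\{y_n=0\}$, $e_n \in \Gamma(v_j)$, and $h_j$ rescales to the $j$-independent $\tilde h(y):=b[(y_n-1)^+]^2$; the failure becomes $\sup_{B_2^+(e_n)}|v_j-\tilde h|>\epsilon_0$ or $\sup_{B_2^+(e_n)}|\nabla v_j-\nabla\tilde h|>\epsilon_0$.

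\emph{Compactness.} The uniform up-to-the-boundary $C^{1,1}$ estimates for members of the class $P^+$ (whose semi-norm is preserved by this scaling) yield $\|v_j\|_{C^{1,1}(K)} \le C(K,M)$ on every compact $K \subset \{y_n \ge 0\}$. Passing to a subsequence, $v_j \to v_0$ in $C^{1,\alpha}_{\loc}$; $v_0 \ge 0$ is a global half-space $L^n$-viscosity solution, $v_0=0$ on $\{y_n=0\}$, and $e_n \in \Gamma(v_0)$ by stability of the free boundary.

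\emph{Classification $v_0 \equiv \tilde h$.} Blow down: any subsequential limit of $V_R(y):=v_0(Ry)/R^2$ as $R \to \infty$ is a $2$-homogeneous non-negative half-space solution vanishing on $\{y_n=0\}$. Non-negativity rules out the mixed profile $\alpha y_1 y_n + \beta y_n^2$ with $\alpha \ne 0$, forcing the limit to be $b y_n^2$ with $F(2b\, e_n \otimes e_n)=1$; in particular $\partial_{y_k} v_0 = o(|y|)$ at infinity for $k<n$. In the open set $\{v_0>0\}$, interior $C^{2,\alpha}$ theory permits differentiation of $F(D^2 v_0)=1$, producing a linear elliptic equation for $\partial_{y_k} v_0$; on the boundary of $\{v_0>0\}$ one has $\nabla v_0=0$ (both on $\{y_n=0\}$ and on $\Gamma(v_0)$), so the tangential derivative vanishes there. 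A Liouville/maximum-principle argument for sublinear solutions with zero boundary data gives $\partial_{y_k} v_0 \equiv 0$ for $k<n$, so $v_0=\phi(y_n)$. Solving the one-dimensional ODE under $\phi \ge 0$, $\phi(0)=0$, and $e_n \in \Gamma(v_0)$ identifies $\phi(y_n)=b[(y_n-1)^+]^2$. The $C^{1,\alpha}_{\loc}$ convergence then contradicts the rescaled failure on $B_2^+(e_n)$.

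\emph{Main obstacle.} The classification step is the crux: both the one-dimensional blow-down and the Liouville reduction of $v_0$ rely essentially on the physicality hypothesis $u \ge 0$, which eliminates the mixed profile in the blow-down and forces $\partial_{y_k} v_0$ to vanish on all of $\partial\{v_0>0\}$. Controlling the linearized equation on the possibly irregular domain $\{v_0>0\}$---and extracting a Liouville conclusion from sublinear (rather than bounded) growth---is the delicate analytical point.
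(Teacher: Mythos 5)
Your proposal shares the paper's overall compactness--contradiction framework and the rescaling by $d_j$ about the foot $((x^j)',0)$, so the setup is essentially identical. Where you diverge is the classification of the limit profile, and there the paper's route is genuinely different from yours and is designed to sidestep the very difficulty you flag.

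The paper introduces the quantity
\[
N(R)= \limsup_{y_n\to 0^+}\ \sup_{|x|\le R,\ x_n>0}\ \sup_{e\perp e_n}\ \sup_{y}\ \frac{\partial_e u_j(y_n x+y)}{x_n\,y_n},
\]
which is a normalized supremum of tangential derivatives near the contact set, uniformly bounded by the $C^{1,1}$ estimate and the Dirichlet condition. The argument then dichotomizes: if $N=0$ the tangential derivatives of the limit $u_0$ vanish, so $u_0=u_0(x_n)$ and $u_0=h$, contradicting the rescaled failure; if $N>0$ one extracts a maximizing sequence, re-rescales by $r_k=|x^k|$ (a second blow-up, not a blow-down of a single fixed profile), and produces a limit with $\partial_{x_1}u_0(z)=Nz_n$ at an interior point $z$ with $|z|>0$ where the normalized derivative $|v|/x_n\le N$ attains its supremum; the classification from \cite{MR3513142} then forces $u_0=ax_1x_n+cx_n+\tilde b x_n^2$ with $a\ne 0$, contradicting $u\ge 0$. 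The extremal quantity $N(R)$ thus converts the Liouville-type rigidity you want into an interior-maximum rigidity, which is the standard way to avoid proving a Liouville theorem from scratch on the irregular domain $\{v_0>0\}$.

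By contrast, your route --- blow down $v_0$ at infinity, use non-negativity to kill the mixed $\alpha y_1y_n$ term, deduce $\partial_{y_k}v_0=o(|y|)$, then run a Liouville argument for the linearized equation with zero data on $\partial\{v_0>0\}$ --- has a genuine gap precisely where you flag it. Two points in particular are not routine: (i) the inference $\partial_{y_k}v_0=o(|y|)$ requires uniqueness of the blow-down, not merely subsequential convergence to $by_n^2$, and you offer no argument for that; and (ii) a Liouville theorem for a sublinearly-growing solution of a linear uniformly elliptic equation on the unknown, possibly non-smooth open set $\{v_0>0\}$, with vanishing trace on its free boundary, does not follow from the standard Liouville or Phragm\'en--Lindel\"of theory, and you do not indicate how the irregular boundary is to be handled. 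The paper's $N(R)$ device is exactly what replaces these two missing steps, so as written your argument is a plausible sketch but not a proof; to close it you would either need to prove the required Liouville statement or recast the argument in the paper's extremal form.
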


\begin{proof}
Suppose the statement is not true, then there exists $\epsilon>0$, non-negative $u_j \in P_1^+(0,M, \Omega)$, and $x^j \in \Gamma(u_j) \cap B_{1/2}^+$, $d_j=x_n^j \rightarrow 0$, so that
$$\sup_{B_{2d_j}(x^j)^+} |u_j- b[(x_n-d_j)^+]^2|>\epsilon d_j^2,$$ or 
$$\sup_{B_{2d_j}(x^j)^+} |\nabla u_j- 2b(x_n-d_j)^+|>\epsilon d_j.$$   
Denote $$\tilde u_j(x)=\frac{u_j((x^j)'+d_jx)}{d_j^2}=\frac{u_j((x^j)'+d_jx)-u_j((x^j)')}{d_j^2},$$
$u_j((x^j)')=0$ because of the Dirichlet boundary condition, so that
$$||\tilde u_j - h||_{C^1(B_2^+(e_n))} \ge \epsilon,$$ $h(x)=b[(x_n-1)^+]^2$. Since $|\nabla \tilde u_j(e_n)|=0,$ the $C^{1,1}$ regularity of $\tilde u_j$ implies that assuming $|x| \le w$

\begin{align*}
|\tilde u_j(x)|&=|\frac{u_j((x^j)'+d_jx)-u_j((x^j)')}{d_j^2}|\\
&\le \frac{|\nabla u_j(r_{x^j})||x|}{d_j} =\frac{|\nabla u_j(r_{x^j})-\nabla u_j(x^j)||x|}{d_j} \le T_1,
\end{align*}
thanks to $\nabla u_j(x^j)=0$; therefore $|\tilde u_j(x)|$ is locally bounded. Via Arzela-Ascoli and the $C^{1,1}$ regularity,
by passing to a subsequence, if necessary, $$\tilde u_j \rightarrow u_0$$ where $u_0 \in C^{1,1}(\mathbb{R}_+^n)$ satisfies the following PDE in the viscosity sense
\begin{equation} \label{m3}
\begin{cases}
F(D^{2}u_0)=1 & \text{a.e. in }\mathbb{R}_+^n\cap\Omega_0,\\
|\nabla u_0|=0 & \text{in }\mathbb{R}_+^n\backslash\Omega_0,\\
u_0=0 & \text{on }\mathbb{R}_+^{n-1}.
\end{cases}
\end{equation}   
Now suppose $R>4$,
$$N=N(R)= \limsup_{y_n\rightarrow 0, y_n>0, y_n \in \{y_n: (y',y_n) \in \cup_j \Gamma(u_j)\}}\sup_{\{x: x_n>0, |x| \le R\}}  \sup_{e \in \mathbb{S}^{n-2} \cap e_n^{\perp}}  \sup_{y \in \overline{B_{1/2}^+} \cap \{y':(y',y_n) \in \Gamma(u_j)\}} \frac{ \partial_e u_j(y_nx+y)}{x_ny_n}$$
and note that $\sup_{R>4} N(R) \le N_*<\infty$ by $C^{1,1}$ regularity and the boundary condition: for any 
$e \in \mathbb{S}^{n-2} \cap e_n^{\perp}$ and $y \in \overline{B_{1/2}^+} \cap \{x_n=0\}$, it follows that $\partial_{e} u_j((y_nx)'+y)=0$. In addition, 

\begin{equation} \label{part}
N \ge \lim_k \bigg| \frac{\partial_{x_i}  u_k(d_k x+(x^k)')}{d_k x_n}\bigg| = \lim_k \bigg| \frac{\partial_{x_i} \tilde u_k(x)}{x_n} \bigg| =\bigg|\frac{\partial_{x_i} u_0(x)}{x_n}\bigg|
\end{equation}
for all $i \in \{1,\ldots, n-1\}$. In particular, let $v= \partial_{x_1} u_0$ so that 
\begin{equation} \label{ine}
|v(x)| \le Nx_n
\end{equation}
 in $B_R^+$.
If $N=0$, then $\partial_{x_i} u_0=0$ for all $i \in \{1,\ldots,n-1\}$ and  $u_0(x)=u_0(x_n)$. Since $e_n$ is a free boundary point, one obtains $u_0=h$, a contradiction via
$$||\tilde u_k - h||_{C^1(B_2^+(e_n))} \ge \epsilon.$$
Therefore $N>0$  
and there is a sequence 
$\{x^k\}_{k \in \mathbb{N}}$ with $x_n^k>0$,  $y^k \in \overline{B_{1/2}^+} \cap \{x_n=0\}\cap \{y':(y',y_n) \in \Gamma(u_k)\}$, and $e^k \in \mathbb{S}^{n-2} \cap e_n^{\perp}$ such that $x^k=(y^k)_nw_k$, $w_k \in B_R^+$,
$$N=\lim_{k \rightarrow \infty} \frac{1}{x_n^k}  \partial_{e^{k}} u_k(x^k+y^k).$$ 
By compactness, along a subsequence $e^k \rightarrow e_1 \in \mathbb{S}^{n-2}$, $w_k \rightarrow w_a  \in \overline{B_R^+}$  so that up to a rotation
$$N= \lim_{k \rightarrow \infty} \frac{1}{x_n^k}  \partial_{x_1} u_k(x^k+y^k).$$ 
The free boundary points $(y^k,(y^k)_n)$ imply $\partial_{x_1} u_k((y^k,(y^k)_n))=0$, therefore because $(y^k)_n\rightarrow 0$, $|w_a|>0$ because for any $(w_k)_n \rightarrow 0$, select $\overline{w}_k=(\overline{w}_k', (w_k)_n)$, where $\overline{w}_k' \nrightarrow 0$; set $\overline{x}^k=(y^k)_n\overline{w}_k,$ then (assuming $k$ is large), $(\overline{x}^k)_n= x_n^k$, but one can choose $\overline{w}_k'$ so that
$$0=\partial_{x_1} u_k((y^k,(y^k)_n)) \approx \partial_{x_1} u_k(x^k+y^k) \le \partial_{x_1} u_k(\overline{x}^k+y^k),$$
in particular $N$ will prefer $\overline{x}^k$ instead of $x^k$ (via the maximality).
Let 
$$\tilde u_k(x)=\frac{u_k(y^k+r_kx)}{r_k^2},$$ $r_k=|x^k|$, $z^k=\frac{x^k}{r_k}$, and note that along a subsequence $z^k \rightarrow z \in \mathbb{S}^{n-1}$ and $\tilde u_k \rightarrow u_0$. It follows that $\partial_{x_1}u_0(z)=Nz_n$ and thanks to $N(R) \le N_*$, one may let $R \rightarrow \infty$. In particular, proceeding as in  \cite{MR3513142}, $u_0(x)=ax_1x_n+cx_n+\tilde b x_n^2$ for $a \neq 0$ and $c, \tilde b \in \mathbb{R}$, contradicting that  $u_k $ is non-negative.   
\end{proof}

\begin{thm}  \label{U1r}
Let $u \in P_1^+(0, M, \Omega)$,  $0 \in \overline{\Gamma(u)},$ and assume that $u \ge 0$.
Then there exists $r_0>0$ such that $\Gamma$ is the graph of a $C^1$ function in $B_{r_0}^+$. 
\end{thm}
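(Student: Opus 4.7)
The plan is to combine the non-transversal intersection of Theorem \ref{tt} with the $1$D profile approximation of Lemma \ref{rl} to realize $\Gamma$ as a $C^1$ graph $x_n = f(x')$ near $0$. A preliminary observation is that the concluding argument of Theorem \ref{tt} combined with $u \ge 0$ forces the unique blow-up of $u$ at $0$ to be $b x_n^2$ with $b > 0$; accordingly the rescalings $u_r(x) = u(rx)/r^2$ converge to $b x_n^2$ in $C^{1,\alpha}_{\loc}$, and $\{u > 0\}$ contains a spatial cone around $e_n$ inside $B_{r_0}^+$.

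Fix a small $\epsilon > 0$ and choose $r_0 < r(\epsilon, M)/2$. Applying Lemma \ref{rl} at each $x^0 = (x_0', d) \in \Gamma \cap B_{r_0}^+$ produces $\|u - b[(x_n - d)^+]^2\|_{C^1(B_{2d}^+(x^0))} \le \epsilon d^2 + \epsilon d$. From this estimate, $u(x) > 0$ and $\partial_{x_n} u(x) > 0$ on $B_{2d}^+(x^0) \cap \{x_n > d(1 + C_1 \sqrt{\epsilon})\}$, and any other free boundary point $(y', y_n) \in \Gamma \cap B_d^+(x^0)$ must satisfy $y_n \le d(1 + C_1 \sqrt{\epsilon})$, since $u((y', y_n)) = 0$ forces $b[(y_n - d)^+]^2 \le \epsilon d^2$. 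If additionally $y_n \ge d/2$, then $x^0 \in B_{2y_n}^+((y', y_n))$, and applying Lemma \ref{rl} at $(y', y_n)$ together with $u(x^0) = 0$ reverses the inequality to $y_n \ge d(1 - C_1 \sqrt{\epsilon})$. Hence all free boundary points in $B_d^+(x^0) \cap \{x_n \ge d/2\}$ lie in the horizontal slab $\{|x_n - d| \le C_1 \sqrt{\epsilon}\, d\}$.

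To address a potential deeper free boundary point $(y', y_n) \in \Gamma \cap B_d^+(x^0)$ with $y_n < d/2$, I consider the intermediate altitude $(y', 2 y_n)$, which belongs to both $B_{2d}^+(x^0)$ and $B_{2 y_n}^+((y', y_n))$. The approximation at $x^0$ gives $u((y', 2y_n)) \le \epsilon d^2$ because the profile $b[(x_n - d)^+]^2$ vanishes at $x_n = 2 y_n < d$, while the approximation at $(y', y_n)$ gives $u((y', 2 y_n)) \ge (b - \epsilon) y_n^2$, forcing $y_n \le d \sqrt{\epsilon/(b - \epsilon)}$. Iterating this estimate at each such low free boundary point, together with the cone trapping $x_n \le \omega(|x'|)|x'|$ from Theorem \ref{tt}, eliminates deep free boundary points in a neighborhood of $x_0'$ once $r_0$ is chosen small enough in terms of $\epsilon$ and $b$. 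Taken together, $\Gamma \cap B_{r_0}^+$ coincides with the graph of a Lipschitz function $f: B_{r_0}'(0) \to [0, \infty)$ with $|\nabla f| \le C_3 \sqrt{\epsilon}$.

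Since $\epsilon$ can be made arbitrarily small by restricting to smaller scales, $|\nabla f(x_0')| \to 0$ as $x_0' \to 0$, giving $f(0) = 0$, $\nabla f(0) = 0$, and $C^1$ regularity at the origin; propagating the argument to every nearby free boundary point yields $C^1$ regularity on $B_{r_0}'$. The main obstacle is the geometric elimination of the deep free boundary points: the comparison at the intermediate altitude yields only the quantitative bound $y_n \lesssim \sqrt{\epsilon}\, d$, which does not by itself exclude them, and a nested iteration exploiting Lemma \ref{rl} at successively smaller scales together with the cone bound of Theorem \ref{tt} is required to produce a globally consistent graph.
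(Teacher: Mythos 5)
The paper's proof is short: it reads off from Lemma \ref{rl} that the free boundary is a Lipschitz graph near $0$ and that the unit normal $N_y$ satisfies $|N_y - e_n| < \epsilon$ whenever $y \in \Gamma$ is sufficiently close to the fixed boundary; Theorem \ref{tt} pins $N_0 = e_n$, and the conclusion is that the normal has a modulus of continuity near the origin (any two normals near $0$ are each close to $e_n$, hence close to each other), giving $C^1$. Your proposal starts from the same ingredients --- Lemma \ref{rl} for the one-dimensional approximation, Theorem \ref{tt} for the orientation --- and tries to build the Lipschitz graph explicitly from the slab containment. That is a legitimate route, and the first half is fine: for $x^0 = (x_0', d) \in \Gamma$, the $C^1$ closeness to $b[(x_n - d)^+]^2$ traps any free boundary point in $B_d^+(x^0)$ with height $\ge d/2$ in the slab $|x_n - d| \lesssim \sqrt{\epsilon}\, d$.

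The gap is where you yourself flag it: excluding ``deep'' free boundary points $(y', y_n)$ with $y_n < d/2$. Your comparison at the intermediate altitude $(y', 2y_n)$ only yields $y_n \lesssim \sqrt{\epsilon}\, d$; it does not eliminate such points, and the appeal to a ``nested iteration ... together with the cone bound of Theorem \ref{tt}'' is not carried out. Theorem \ref{tt} controls $x_n$ in terms of $|x'|$ globally but does not compare two free boundary points at nearby $x'$ and very different heights, so it cannot by itself rule out a deep dip. The step that closes this in the literature (and that the paper is implicitly invoking when it asserts ``the lemma implies that the free boundary is Lipschitz near the origin'') is a directional-monotonicity argument: one shows $\partial_e u \ge 0$ in $B_{cd}^+(x^0)$ for all directions $e$ in a cone about $e_n$, using the boundary estimate $|\partial_e u - \partial_e h| \le \epsilon d$ together with the linear elliptic equation satisfied by $\partial_e u$ in $\Omega$ and the vanishing of $\partial_e u$ on $\partial\Omega$. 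Monotonicity in a cone is exactly the statement that $\Gamma$ is the graph of a Lipschitz function with small constant, and it excludes the deep points automatically. Without this, the graph structure is not established, and the concluding paragraph (``$|\nabla f(x_0')| \to 0$ as $x_0' \to 0$'') has no function $f$ to refer to. Supplying the monotonicity step would make your argument complete and would essentially reproduce the paper's reasoning with the Lipschitz-graph assertion justified rather than cited.
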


\begin{proof}
The lemma implies that the free boundary is Lipschitz near the origin. Also, if $\epsilon>0$, there is $\alpha_\epsilon>0$ such that for $y \in B_{\alpha_\epsilon}^+$ with $N_y$ a normal, 
$$
|N_y - e_n|<\epsilon.
$$
The non-transversal intersection yields that the normal exists at the origin and $N_0=e_n.$ Observe that there exists $r_0>0$ such that for $x \in B_{r_0}^+$, $\epsilon>0$, there is $a_\epsilon>0$ such that if $|x-z|<a_\epsilon$, 
$$
|N_x-N_z|<\epsilon.
$$
Assume not, then there exists $\epsilon>0$, $x_k, z_k$, $|z_k| \rightarrow 0$, $|x_k| \rightarrow 0$, 
$$|N_{x_k}-N_{z_k}| \ge \epsilon.$$
Observe that via $\alpha_{\frac{\epsilon}{2}}>0$, 
$z_k,x_k \in B_{\alpha_\epsilon}^+$
supposing $k$ large; in particular,

$$
|N_{z_k} - e_n|<\frac{\epsilon}{2},
$$

$$
|N_{x_k} - e_n|<\frac{\epsilon}{2},
$$
thus that contradicts
$$
\epsilon \le |N_{z_k} - N_{x_k}|<\epsilon.
$$

\end{proof}

In the physical case, the blow-up 
$$
u_0(x)=ax_1x_n+cx_n+\tilde b x_n^2,
$$
$a \neq 0$, is ruled out thanks to $u \ge 0$. Supposing $n=2$, one may rule out the undesired blow-up in general with a maximum principle \cite{MR3986537}.  

\begin{lem} \label{rln}
Suppose $u \in P_1^+(0,M, \Omega)$ and  $n=2.$
Then for any $\epsilon>0$ there exists $r(\epsilon, u)>0$ such that if $x^0 \in \Gamma(u) \cap B_{1/2}^+$ and $d=x_2^0<r,$ then 
$$ 
\sup_{B_{2d}^+(x^0)} |u-h| \le \epsilon d^2, \hskip .2in \sup_{B_{2d}^+(x^0)} | \nabla u - \nabla h| \le \epsilon d, 
$$
where $$h(x)= b[(x_2-d)^+]^2,$$ and $b>0$ depends on the ellipticity constants of $F$. 
\end{lem}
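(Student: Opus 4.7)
The plan is to run the compactness-and-classification scheme of Lemma~\ref{rl} essentially verbatim, replacing only the final blow-up classification step (which relied on $u \ge 0$) by a two-dimensional maximum principle argument.

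Suppose the estimate fails. Then there exist $\eps > 0$ and a sequence $x^j \in \Gamma(u) \cap B_{1/2}^+$ with $d_j = x_2^j \to 0$ for which the conclusion of the lemma is violated at scale $d_j$. Setting $\tilde u_j(x) = u((x^j)' + d_j x)/d_j^2$, one has $\|\tilde u_j - h\|_{C^1(B_2^+(e_2))} \ge \eps$ where now $h(x) = b[(x_2-1)^+]^2$. By the $C^{1,1}$ estimates of \cite{MR3513142} and Arzel\`a--Ascoli, pass to a subsequential limit $u_0 \in C^{1,1}(\overline{\R_+^2})$ satisfying \eqref{m3}. As in Lemma~\ref{rl}, introduce the maximal tangential-derivative ratio $N$ (with the supremum taken over free boundary contact points of the single solution $u$) and deduce $|\partial_{x_1} u_0(x)| \le N x_2$ in the half-plane. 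If $N = 0$, then $u_0 = u_0(x_2)$; combined with $e_2 \in \Gamma(u_0)$ this forces $u_0 = h$, contradicting the lower bound on $\|\tilde u_j - h\|_{C^1(B_2^+(e_2))}$. If $N > 0$, a second blow-up at a maximizing sequence followed by the classification in \cite{MR3513142} produces $u_0(x) = a x_1 x_2 + c x_2 + \tilde b x_2^2$ with $a \neq 0$.

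The crux, and the only place where the dimensional restriction $n = 2$ enters, is to rule out this mixed-term blow-up in the absence of any sign condition on $u$. Here I would invoke the planar maximum principle of \cite{MR3986537} applied to the horizontal derivative $w = \partial_{x_1} u$. Because $F$ is convex and $C^1$, $w$ solves a linear uniformly elliptic equation on $\Omega \cap B_\rho^+$ with $w = 0$ on the fixed boundary; in $n = 2$ the free boundary consists of one-dimensional arcs along which $w$ inherits strong sign conditions, and a Hopf/comparison argument then precludes a nontrivial mixed term $a x_1 x_2$ in any blow-up. Once this blow-up is excluded, the dichotomy collapses to the $N = 0$ branch and the lemma follows. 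The delicate step is arranging the sign conditions on $w$ and the comparison barriers to pass uniformly to the limit $j \to \infty$, so that the contradiction survives passage to the blow-up; this is precisely what requires the planar setting and is not available in the general framework of Lemma~\ref{rl}.
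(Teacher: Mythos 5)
The architecture you propose is the right one and matches the paper: rerun the compactness scheme of Lemma~\ref{rl} for the single solution $u$ (hence $r = r(\epsilon,u)$), reduce to the dichotomy $N=0$ versus $N>0$, and in the latter case obtain the blow-up $u_0 = ax_1x_2 + cx_2 + \tilde b x_2^2$ with $a \neq 0$, which must then be excluded by a planar maximum-principle argument rather than by the sign of $u$. Up to that point your outline is faithful.

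However, the exclusion of the mixed term is exactly the content of the lemma, and your proposal leaves it as an unproven claim. You assert that ``the free boundary consists of one-dimensional arcs along which $w$ inherits strong sign conditions, and a Hopf/comparison argument then precludes a nontrivial mixed term,'' and you yourself flag the ``delicate step'' of passing sign conditions to the limit — but no mechanism is given, and the paper's actual argument is not a passage to the limit of sign conditions on $w = \partial_{x_1}u$ at the level of the original solution. What the paper does is work at a fixed large $k$ with the rescaled $\tilde u_k$: (i) use that in $n=2$ the critical set of $u_0$ is the single point $x_{a,c} = (-c/a,0)$, so that $|\nabla \tilde u_k|$ is bounded below away from $x_{a,c}$ and $\partial_{x_1}\tilde u_k \ge \frac{a}{2}x_2$ there; (ii) observe that rescaled free boundary points of $u$ accumulate near $x_{a,c}$, forcing $\operatorname{Int}\{\nabla\tilde u_k = 0\}$ to meet $B_{R/2}^+(x_{a,c})$, i.e.\ $\tilde u_k$ has a genuine plateau $\{\tilde u_k = m\}$ with nonempty interior; (iii) show $v = \partial_{x_1}\tilde u_k > 0$ in $S \cap \Omega_k$ for a suitable strip $S$ by the maximum principle; and (iv) slide a disk from the plateau in the $e_1$ direction to a first touching point of $\{\tilde u_k = m\}$ and apply Hopf's lemma there to contradict (iii). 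Steps (ii) and (iv) — the existence of an interior plateau near the blow-up's critical point and the sliding-disk contradiction — are the essential two-dimensional input, and neither appears in your sketch. As written, your proposal is a plan whose pivotal step is the statement of the lemma in disguise; to complete it you would need to supply something equivalent to (i)--(iv).
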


\begin{proof}
Suppose not, then there exists $\epsilon>0$, $x^k \in \Gamma(u) \cap B_{1/2}^+$ with $d_k=x_n^k \rightarrow 0$, for which 
$$\sup_{B_{2d_k}(x^k)^+} |u- b[(x_n-d_k)^+]^2|>\epsilon d_k^2,$$ or 
$$\sup_{B_{2d_k}(x^k)^+} |\nabla u- 2b(x_n-d_k)^+|>\epsilon d_k.$$   
Let $$\tilde u_k(x)=\frac{u((x^k)'+d_kx)}{d_k^2}=\frac{u((x^k)'+d_kx)-u((x^k)')}{d_k^2},$$
$u((x^k)')=0$ because of the Dirichlet boundary condition, so that in particular 
$$||\tilde u_k - h||_{C^1(B_2^+(e_n))} \ge \epsilon,$$ where $h(x)=b[(x_n-1)^+]^2$. Since $|\nabla \tilde u_k(e_n)|=0,$ the $C^{1,1}$ regularity of $\tilde u_k$ implies that assuming $|x| \le w$,

\begin{align*}
|\tilde u_k(x)|&=|\frac{u((x^k)'+d_kx)-u((x^k)')}{d_k^2}|\\
&\le \frac{|\nabla u(r_{x^k})||x|}{d_k} =\frac{|\nabla u(r_{x^k})-\nabla u(x^k)||x|}{d_k} \le T_1,
\end{align*}
thanks to $\nabla u(x^k)=0$; therefore $|\tilde u_k(x)|$ is locally bounded. Via Arzela-Ascoli,
by passing to a subsequence, if necessary, $$\tilde u_k \rightarrow u_0$$ where $u_0 \in C^{1,1}(\mathbb{R}_+^n)$ satisfies the following PDE in the viscosity sense
\begin{equation} \label{m3}
\begin{cases}
F(D^{2}u_0)=1 & \text{a.e. in }\mathbb{R}_+^n\cap\Omega_0,\\
|\nabla u_0|=0 & \text{in }\mathbb{R}_+^n\backslash\Omega_0,\\
u_0=0 & \text{on }\mathbb{R}_+^{n-1}.
\end{cases}
\end{equation}   
Now suppose $R>4$, 
$$N=N(R)=\limsup_{y_n\rightarrow 0, y_n>0, y_n \in \{y_n: (y',y_n) \in \Gamma\}}\sup_{\{x: x_n>0, |x| \le R\}}   \sup_{e \in \mathbb{S}^{n-2} \cap e_n^{\perp}}  \sup_{y \in \overline{B_{1/2}^+}  \cap \{y':(y',y_n) \in \Gamma\}} \frac{\partial_e u(y_nx+y)}{x_ny_n}$$
and note that $\sup_{R>4} N(R) \le N_*<\infty$ by $C^{1,1}$ regularity and the boundary condition: for any 
$e \in \mathbb{S}^{n-2} \cap e_n^{\perp}$ and $y \in \overline{B_{1/2}^+} \cap \{x_n=0\}$, it follows that $\partial_{e} u((y_nx)'+y)=0$. In addition, 

\begin{equation} \label{part}
N \ge \lim_k \bigg| \frac{\partial_{x_i}  u(d_k x+(x^k)')}{d_k x_n}\bigg| = \lim_k \bigg| \frac{\partial_{x_i} \tilde u_k(x)}{x_n} \bigg| =\bigg|\frac{\partial_{x_i} u_0(x)}{x_n}\bigg|
\end{equation}
for all $i \in \{1,\ldots, n-1\}$. In particular, let $v= \partial_{x_1} u_0$ so that 
\begin{equation} \label{ine}
|v(x)| \le Nx_n
\end{equation}
 in $B_R^+$.
If $N=0$, then $\partial_{x_i} u_0=0$ for all $i \in \{1,\ldots,n-1\}$ and  $u_0(x)=u_0(x_n)$. Since $e_n$ is a free boundary point, one obtains $u_0=h$, a contradiction via
$$||\tilde u_k - h||_{C^1(B_2^+(e_n))} \ge \epsilon.$$
Therefore $N>0$  
and there is a sequence as in  Lemma \ref{rl},
$\{x^k\}_{k \in \mathbb{N}}$ with $x_n^k>0$,  $y^k \in \overline{B_{1/2}^+} \cap \{x_n=0\}\cap \{y':(y',y_n) \in \Gamma\}$, and $e^k \in \mathbb{S}^{n-2} \cap e_n^{\perp}$ such that $x^k=(y^k)_nw_k$, $w_k \in B_R^+$,
$$N=\lim_{k \rightarrow \infty} \frac{1}{x_n^k}  \partial_{e^{k}} u(x^k+y^k).$$ 
By compactness, along a subsequence $e^k \rightarrow e_1 \in \mathbb{S}^{n-2}$, $w_k \rightarrow w_a$ ($|w_a|>0$) so that up to a rotation
$$N= \lim_{k \rightarrow \infty} \frac{1}{x_n^k}  \partial_{x_1} u(x^k+y^k).$$ Let 
$$\tilde u_k(x)=\frac{u(y^k+r_kx)}{r_k^2},$$ $r_k=|x^k|$, $z^k=\frac{x^k}{r_k}$, and note that along a subsequence $z^k \rightarrow z \in \mathbb{S}^{n-1}$ and $\tilde u_k \rightarrow u_0$. It follows that $\partial_{x_1}u_0(z)=Nz_n$ and thanks to $N(R) \le N_*$, one may let $R \rightarrow \infty$. In particular, proceeding as in  Lemma \ref{rl}, it follows that $u_0(x)=ax_1x_n+cx_n+\tilde b x_n^2$ for $a > 0$ and $c, \tilde b \in \mathbb{R}$.   
Note that 
$$
\nabla u_0(x)= (ax_n, 0\ldots 0, ax_1+c+2\tilde b x_n).
$$
In particular, supposing $n=2$,
$$
\nabla u_0(x)=0
$$
iff $x_{a,c}=(\frac{-c}{a},0)$. Set $R \ge 1$ and note that since $\tilde u_k \rightarrow u_0$ in $C_{loc}^{1,\alpha}$, there exists $k_R \in \mathbb{N}$ such that for $k \ge k_R$, $|\nabla \tilde u_k|>\tilde c$ in 
$$E=\big(\{-R+\frac{-c}{a}<x_1<-\frac{R}{2}+\frac{-c}{a}\}\cup \{\frac{R}{2}+\frac{-c}{a}<x_1<R+\frac{-c}{a}\}\big)\cap B_{2R}^+(x_{a,c}),$$ and therefore, $\tilde u_k \in C^{2,\alpha}(E)$. Up to a subsequence, 
$$\frac{|\partial_{x_1} \tilde u_k-ax_2|}{x_2}=m_k \rightarrow 0$$ so that
$$
\partial_{x_1} \tilde u_k \ge \frac{a}{2}x_2
$$ 
in $E$ for $k \ge \tilde k_R \in \mathbb{N}$. Next pick $\eta \in (0,R)$ and $k_0'$ so that if $k \ge k_0'$, 
$$\partial_{x_1} \tilde u_k(x) \ge \frac{a \eta}{2}$$ for $x \in \{x_n \ge \eta \}$. 
Observe $y_j=(y^j,(y^j)_n) \in \Gamma$, hence 
$$
\frac{y_j-y^k}{r_k}
$$
is a free boundary element for $\tilde u_k$,  $k \ge k_0'$, and $j \in \mathbb{N}$. Now, $\nabla u_0(x)=0$ iff $x_{a,c}=(\frac{-c}{a},0)$ then implies that assuming $j$ large ($k$ large and fixed), via $(y^j)_n \rightarrow 0$,
$$
\frac{y_j-y^k}{r_k} \in B_{R/4}^+(x_{a,c}).
$$
Therefore 
$$
\frac{y_j-y^k}{r_k} \in \Gamma(\tilde u_k)
$$
then yields 
$$\text{Int}\{ \nabla \tilde u_k =0 \} \cap B_{R/2}^+(x_{a,c})\neq \emptyset$$ 
since supposing this is  not true, non-degeneracy yields  $\tilde u_{k} \in C^{2,\alpha}(B_{R/4}^+(x_{a,c}))$; since $\frac{y_j-y^k}{r_k}$
is a free boundary element for $\tilde u_k$, that contradicts the continuity of $F$.   
 If $$S=\{0 < x_2 < \eta, -R+\frac{-c}{a}< x_1 < R+\frac{-c}{a}\},$$ observe $v:=\partial_{x_1} \tilde u_k\ge 0$ on $\partial S$. By differentiating, $Lv=0$ on $S \cap \Omega_k$, where $L$ is a linear second order uniformly elliptic operator. Next, since $v$ vanishes on $\partial \Omega_k$, it follows that $v>0$ in $S \cap \Omega_k$ with the maximum principle. Note that for a disk $B \subset B_{R/2}^+(x_{a,c})$ such that $\nabla \tilde u_k =0$ in $B$, $u_k=m$ for some $m \in \mathbb{R}$ and there is a strip $\tilde S$ generated  via  translating the disk in the $x_1$-direction. Choosing another disk 
$$\tilde B \subset \tilde S \cap \{-R +\frac{-c}{a}< x_1 < -\frac{R}{2}+\frac{-c}{a}\},$$ 
let $E_t=\tilde B+te_1$ for $t \in \mathbb{R}$;  $v=0$ in $\Omega_k^c$ and $v>0$ in $S \cap \Omega_k$, therefore it follows that $u < m$ on $\tilde B$. Next, assume $t^*>0$ denotes the first value for which $\partial E_t$ intersects $\{\tilde u_k=m\}$ and let 
$$y \in \partial E_{t^*} \cap \{\tilde u_k=m\}.$$ Via $F(D^2 \tilde u_k) \ge 0$ and $w=\tilde u_k-m<0$ in $E_{t^*}$ with $w(y)=0$, Hopf's principle  implies (see e.g. \cite{MR2994551}) $\partial_{\mathcal{N}} \tilde u_k(y) >0$. In particular, there is $\mu>0$ such that $B_\mu(y) \subset \Omega_k$. Therefore $v>0$ on $B_\mu(y)$ and since $v = 0$ on $\Omega_k^c$, there is $z>0$ such that $\tilde u_k(y+e_1z)>m$ for $y+e_1z \in \{\tilde u_k = m\}$, a contradiction.
\end{proof}

This implies $C^1$ regularity in two dimensions without sign assumptions and thus solves the problem in \cite[Chapter 8, Remark 8.8]{MR2962060}.

\begin{thm}  \label{1r}
Let $u \in P_1^+(0, M, \Omega)$,  $0 \in \overline{\Gamma(u)},$ and assume that $n=2$.
Then there exists $r_0>0$ such that $\Gamma$ is the graph of a $C^1$ function in $B_{r_0}^+$. 
\end{thm}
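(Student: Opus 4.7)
The plan is to imitate the proof of Theorem \ref{U1r} verbatim, substituting Lemma \ref{rln} (the two-dimensional analog that does not require $u \ge 0$) for Lemma \ref{rl}, and invoking Theorem \ref{tt} which already holds in the absolute generality at hand. The sign assumption in Theorem \ref{U1r} was only used to access the regularity lemma, and since Lemma \ref{rln} removes that dependence in dimension two, the whole chain of reasoning transports with no modification.

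First, I would apply Lemma \ref{rln} at every free boundary point $x^0 \in \Gamma(u) \cap B_{1/2}^+$ with $d = x_2^0$ sufficiently small. The uniform closeness of $u$ and $\nabla u$ to $h(x) = b[(x_2-d)^+]^2$ and $\nabla h$ on the ball $B_{2d}^+(x^0)$ confines $\Gamma$ to a thin slab about the horizontal line $\{x_2 = d\}$, which produces both a Lipschitz graph representation of $\Gamma$ near the origin and, at each free boundary point $y$ in a small half-ball $B_{\alpha_\epsilon}^+$, a well-defined normal $N_y$ with $|N_y - e_2| < \epsilon$. Here $\alpha_\epsilon \to 0$ as $\epsilon \to 0$.

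Second, Theorem \ref{tt} rules out the tangential intersection of $\Gamma$ with $\{x_2 = 0\}$ at the origin, so any blow-up limit at $0$ is of the form $b x_2^2$ (using \cite[Proposition 3.6]{I} as in Theorem \ref{tt}). Consequently, the normal exists at the origin and equals $e_2$. Combined with the previous step, this lets one patch the Lipschitz graph description at every point of $\Gamma \cap B_{r_0}^+$ (including the origin) with a consistent, almost-vertical normal.

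Third, I would establish continuity of the assignment $y \mapsto N_y$ in a neighborhood of the origin by the same soft argument as in Theorem \ref{U1r}: if continuity failed, there would be sequences $x_k, z_k \in \Gamma$ with $|x_k|, |z_k| \to 0$ and $|N_{x_k} - N_{z_k}| \ge \epsilon$, but for $k$ large both points lie in $B_{\alpha_{\epsilon/2}}^+$, whence $|N_{x_k} - e_2|, |N_{z_k} - e_2| < \epsilon/2$, contradicting the triangle inequality. This continuity, together with the Lipschitz graph structure, upgrades $\Gamma$ to a $C^1$ graph in $B_{r_0}^+$. The genuine obstacle is already absorbed into Lemma \ref{rln}, where the unwanted blow-up $u_0 = a x_1 x_2 + c x_2 + \tilde b x_2^2$ with $a \neq 0$ is excluded via Hopf's lemma applied to the harmonic-type derivative $\partial_{x_1}\tilde u_k$ on a strip; given that lemma, the theorem is essentially a formal consequence requiring no new estimates.
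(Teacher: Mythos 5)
Your proposal is correct and follows precisely the route the paper intends: the paper omits an explicit proof of Theorem \ref{1r} and simply remarks that Lemma \ref{rln} ``implies $C^1$ regularity in two dimensions without sign assumptions,'' meaning the argument of Theorem \ref{U1r} should be rerun verbatim with Lemma \ref{rln} substituted for Lemma \ref{rl}, which is exactly what you do. Your observation that the sign hypothesis in Theorem \ref{U1r} is used only to invoke Lemma \ref{rl}, and that Theorem \ref{tt} already holds in full generality to fix the normal at the origin, correctly identifies why the substitution closes the proof with no further work.
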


\bibliographystyle{alpha}

\bibliography{ngonref2}

\begin{thebibliography}{CCKS96}

\bibitem[Caf77]{MR0454350}
Luis~A. Caffarelli.
\newblock The regularity of free boundaries in higher dimensions.
\newblock {\em Acta Math.}, 139(3-4):155--184, 1977.

\bibitem[CCKS96]{MR1376656}
L.~Caffarelli, M.~G. Crandall, M.~Kocan, and A.~Swiech.
\newblock On viscosity solutions of fully nonlinear equations with measurable
  ingredients.
\newblock {\em Comm. Pure Appl. Math.}, 49(4):365--397, 1996.

\bibitem[CKS00]{MR1745013}
Luis~A. Caffarelli, Lavi Karp, and Henrik Shahgholian.
\newblock Regularity of a free boundary with application to the {P}ompeiu
  problem.
\newblock {\em Ann. of Math. (2)}, 151(1):269--292, 2000.

\bibitem[CLN13]{MR2994551}
Luis Caffarelli, Yanyan Li, and Louis Nirenberg.
\newblock Some remarks on singular solutions of nonlinear elliptic equations
  {III}: viscosity solutions including parabolic operators.
\newblock {\em Comm. Pure Appl. Math.}, 66(1):109--143, 2013.

\bibitem[IM16]{MR3513142}
Emanuel Indrei and Andreas Minne.
\newblock Nontransversal intersection of free and fixed boundaries for fully
  nonlinear elliptic operators in two dimensions.
\newblock {\em Anal. PDE}, 9(2):487--502, 2016.

\bibitem[Ind19a]{MR3986537}
Emanuel Indrei.
\newblock Non-transversal intersection of the free and fixed boundary in the
  mean-field theory of superconductivity.
\newblock {\em Interfaces Free Bound.}, 21(2):267--272, 2019.

\bibitem[Ind19b]{I}
Emanuel Indrei.
\newblock Boundary regularity and non-transversal intersection for the fully
  nonlinear obstacle problem.
\newblock {\em Comm. Pure Appl. Math.}, doi.org/10.1002/cpa.21814, 2019.

\bibitem[LP21]{MR4322624}
Ki-Ahm Lee and Jinwan Park.
\newblock The regularity theory for the parabolic double obstacle problem.
\newblock {\em Math. Ann.}, 381(1-2):685--728, 2021.

\bibitem[PSU12]{MR2962060}
Arshak Petrosyan, Henrik Shahgholian, and Nina Uraltseva.
\newblock {\em Regularity of free boundaries in obstacle-type problems}, volume
  136 of {\em Graduate Studies in Mathematics}.
\newblock American Mathematical Society, Providence, RI, 2012.

\bibitem[Sch77]{MR0516201}
David~G. Schaeffer.
\newblock Some examples of singularities in a free boundary.
\newblock {\em Ann. Scuola Norm. Sup. Pisa Cl. Sci. (4)}, 4(1):133--144, 1977.

\bibitem[SU03]{MR1950478}
Henrik Shahgholian and Nina Uraltseva.
\newblock Regularity properties of a free boundary near contact points with the
  fixed boundary.
\newblock {\em Duke Math. J.}, 116(1):1--34, 2003.

\end{thebibliography}

\end{document}